\newtheorem{theorem}{Theorem}[section]
\newtheorem{lemma}[theorem]{Lemma}
\newtheorem{corollary}[theorem]{Corollary}
\newtheorem{proposition}[theorem]{Proposition}
\theoremstyle{definition}
\newtheorem{remark}[theorem]{Remark}
\newtheorem{definition}[theorem]{Definition}
\newtheorem{example}[theorem]{Example}
\numberwithin{equation}{section}
\newcommand{\C}{\mathbb{C}}
\newcommand{\D}{\mathbb{D}}
\newcommand{\calD}{\mathcal{D}}
\newcommand{\calM}{\mathcal{M}}
\newcommand{\calX}{\mathcal{X}}
\newcommand{\inner}[2]{{\langle#1,#2\rangle}}
\newcommand{\Binner}[2]{{\Big\langle#1,#2\Big\rangle}}
\newcommand{\binner}[2]{{\big\langle#1,#2\big\rangle}}
\newcommand{\Ho}{\mathcal{H}^2_{\omega}}
\renewcommand{\vector}[2]{{\begin{bmatrix}
{#1}\\
\vdots\\
{#2}
\end{bmatrix}}}
\newcommand{\ds}{\displaystyle}
\DeclareMathOperator{\lspan}{{\rm span}}
\DeclareMathOperator{\mult}{{\rm Mult}}
\DeclareMathOperator{\real}{{\rm Re}}
\begin{document}

\title[Inner functions]{Inner functions in weighted Hardy spaces}

\author{Trieu Le}
\address{Department of Mathematics and Statistics, Mail Stop 942, University of Toledo, Toledo, OH 43606}
\email{Trieu.Le2@utoledo.edu}

\subjclass[2010]{Primary 30J05; Secondary 46E22}
\keywords{inner function; Hardy space; Bergman space; Dirichlet space; Reproducing kernel Hilbert space}

\begin{abstract}
Inner functions play a central role in function theory and operator theory on the Hardy space over the unit disk. Motivated by recent works of C. B\'en\'eteau et al. and of D. Seco, we discuss inner functions on more general weighted Hardy spaces and investigate a method to construct analogues of finite Blaschke products. 
\end{abstract}

\maketitle

\section{Introduction}
\label{S:intro}
Let $\omega=\{\omega_n\}_{n\geq 0}$ be a sequence of positive real numbers. We denote by $\Ho$ the Hilbert space of all formal power series $f(z)=\sum_{n=0}^{\infty}a_nz^n$ such that
\[\|f\| = \Big(\sum_{n=0}^{\infty}\omega_n\,|a_n|^2\Big)^{1/2}<\infty.\]
The corresponding inner product shall be denoted by $\inner{}{}$. For any $f,g\in\Ho$,
\[\inner{f}{g} = \sum_{n=0}^{\infty}\omega_n\,a_n\bar{b}_n,\] if $f$ is as above and $g(z)=\sum_{n=0}^{\infty}b_nz^n$. Define $e_n(z)=z^n/\sqrt{\omega_n}$ for all integers $n\geq 0$. Then $\{e_n\}_{n\geq 0}$ forms an orthonormal basis for $\Ho$.

Without loss of generality we shall always assume $\omega_0=1$. In addition, we shall restrict our attention to sequences $\omega$ satisfying
\begin{align}
\label{Eqn:weight_condition}
\lim_{n\to\infty}\frac{\omega_{n+1}}{\omega_n}=1.
\end{align}
This condition guarantees that each function in $\Ho$ is holomorphic on the unit disk and the operator of multiplication by $z$ is a bounded operator on $\Ho$. Furthermore, any function holomorphic on a disk of radius strictly larger than $1$ belongs to $\Ho$. It can be shown that $\Ho$ is a reproducing kernel Hilbert space of holomorphic functions on $\D$, that is, for each $\lambda\in\D$, the evaluation map $f\mapsto f(\lambda)$ is a bounded linear functional on $\Ho$. Consequently, there exists a reproducing kernel $K_{\lambda}\in\Ho$ for which
\[f(\lambda) = \inner{f}{K_\lambda}\,\quad\text{ for all } f\in\Ho.\]
We define the function $K(z,\lambda)=K_\lambda(z)=\inner{K_\lambda}{K_z}$ for $z,\lambda\in\D$ and call $K$ the reproducing kernel for $\Ho$. See \cite[Chapter 2]{CowenCRCP1995} for more details on $\Ho$.

A holomorphic function $\varphi$ on $\D$ is called a \textbf{multiplier} for $\Ho$ if for any $f\in\Ho$, the product function $\varphi f$ also belongs to $\Ho$. In that case, the operator $M_{\varphi}$ of multiplication by $\varphi$ is bounded on $\Ho$. We shall denote
\[\|\varphi\|_{\mult(\Ho)} = \|M_{\varphi}\|\] and call it the multiplier norm of $\varphi$ on $\Ho$. Note that since $\|1\|=1$ on $\Ho$, we always have
\[\|\varphi\| \leq \|\varphi\|_{\mult(\Ho)}.\]
It is well known and not difficult to verify that the operator $M_{z}$ of multiplication by $z$ is a weighted forward shift of multiplicity one:
\begin{align}
\label{Eqn:Mz}
M_{z}(e_n) = \sqrt{\frac{\omega_{n+1}}{\omega_{n}}}\,e_{n+1}\quad\text{ for all } n\geq 0.
\end{align}
Condition \eqref{Eqn:weight_condition} shows that $M_{z}$ is bounded on $\Ho$, which implies that any polynomial is a multiplier of $\Ho$. Furthermore, it is not difficulty to verify that $\|M_{z}f\|\geq \|f\|$ for all $f\in\Ho$ if and only if $\omega$ is non-decreasing. In such a case, we say that $\Ho$ has the expansive shift property.

If $\omega_n=1$ for all $n\geq 0$, then $\Ho$ is the usual Hardy space $H^2$ over the unit disk. The norm on $H^2$ is also given by
\[\|f\|_{H^2}^2 = \sup_{0<r<1}\Big(\frac{1}{2\pi}\int_{0}^{2\pi}|f(re^{i\,t})|^2\,dt\Big).\]
As usual, we denote by $H^{\infty}$ the Banach space of all bounded holomorphic functions $f$ on the unit disk with norm
\[\|f\|_{\infty} = \sup_{z\in\D}|f(z)|.\]
It is well known that $H^{\infty}$ is the space of all multipliers for $H^2$. See, for example, the books \cite{DurenAP1970, NikolskiAMS2002, ZhuAMS2007} for details on the Hardy space.

If $\omega_n = \frac{1}{n+1}$ for all $n\geq 0$, then $\Ho$ is the Bergman space $A^2$ of holomorphic functions on the unit disk. For $f\in A^2$, we also have
\[\|f\|^2_{A^2} = \int_{\D}|f(z)|^2\,dA(z),\]
where $dA$ is the normalized Lebesgue measure on $\D$. It is well known that the multiplier space of $A^2$ coincides with $H^{\infty}$. The theory of Bergman spaces have been discussed in different aspects in several books. See, for example, \cite{DurenAMS2004, HedenmalmSpringer2000, ZhuAMS2007}.

In the case $\omega_n=n+1$ for all $n\geq 0$, the space $\Ho$ coincides with the Dirichlet space $\calD$, which consists of holomorphic functions $f$ on the unit disk for which
\[\|f\|^2_{\calD} = \|f\|_{H^2}^2 + \|f'\|^2_{A^2} < \infty.\]
The books \cite{ArcozziNYJM2011, El-FallahCTM2014} provide excellent references on the Dirichlet space.

The Dirichlet space $\calD$ belongs to a larger class of Hilbert space of holomorphic functions, called Dirichlet-type spaces. For any positive measure $\mu$ on $\D$, one define $\calD_{\mu}$ as the space of all holomorphic functions $f$ on $\D$ for which
\[\|f\|^2_{\calD_{\mu}} = \|f\|_{H^2}^2 + \int_{\D}|f'(z)|^2\,d\mu(z) <\infty.\]
If $\mu$ is rotation invariant, then $\calD_{\mu}$ coincides with $\Ho$ for an appropriate weight sequence $\omega$.

Recall that a bounded holomorphic function $f$ on $\D$ is called an inner function if $|f(\zeta)|=1$ for a.e. $|\zeta|=1$. Inner functions play an important role in function theory and operator theory on Hardy spaces. The celebrated Beurling's Theorem asserts that any closed subspace of $H^2$ that is invariant for the operator $M_{z}$ of multiplication by $z$ is given by $\varphi H^2$ for some inner function $\varphi$.  See \cite{ChalendarNWEJM2015} for a recent survey of classical and new results linking inner functions and operator theory. Inner functions are characterized as functions $f\in H^{2}$ such that
\[\|f\|_{\infty} = \|f\|_{H^2}=1.\]
Interestingly, inner functions can also be characterized via the inner product in $H^2$. Indeed, a function $f\in H^2$ is inner if and only if $\|f\|_{H^2}=1$ and $\inner{z^mf}{f}=0$ for all integers $m\geq 1$.

In the case of the Dirichlet space $\calD$, Richter \cite{RichterJRAM1988} showed that any invariant subspace for $M_{z}$ is also generated by a single function that satisfies the same orthogonality properties above.

Invariant subspaces on the Bergman space $A^2$ turn out to be more complicated. Aleman, Richter and Sundberg \cite{AlemanActaMath1996} proved an analogue of Beurling's Theorem for $A^2$: any invariant subspace $\calM$ of $A^2$ is generated by the so-called wandering subspace $\calM\ominus z\calM$. Any unit norm function in this subspace satisfies $\|f\|_{A^2}=1$ and $z^mf\perp f$ for all $m\geq 1$ and is called an $A^2$-inner function. See \cite[Chapters~5 and 9]{DurenAMS2004} and \cite[Chapter~3]{HedenmalmSpringer2000} for a detailed discussion of inner functions on Bergman spaces $A^p$. 


Generalizing the notation of inner functions, several researchers have defined and studied inner functions on weighted Hardy spaces. 

\begin{definition}
A function $f\in\Ho$ is an $\Ho$-\textbf{inner function} if $\|f\|_{\Ho}=1$ and for all integers $m\geq 1$,
\[\inner{z^m f}{f} = 0.\]
Equivalently, $\inner{pf}{f}=p(0)$ for all polynomials $p$.
\end{definition}

Operator-valued inner functions on vector-valued weighted Hardy spaces have also been defined and studied \cite{BallIEOT2013, BallASM2013, OlofssonAiA2007}. In particular, Ball and Bolotnikov \cite{BallASM2013} obtained a realization of inner functions on vector-valued weighted Hardy spaces. In \cite{BallCM2016}, they investigated the expansive multiplier property of inner functions. They obtained a sufficient condition on the weight sequences for which any inner function has the expansive multiplier property. Recently, B\'en\'eteau et al. \cite{BeneteauJLMS2016, BeneteauCMB2017} studied inner functions and examined the connections between them and optimal polynomial approximants. They also described a method to construct inner functions that are analogues of finite Blaschke products with simple zeroes. In \cite{SecoCAOT2019}, Seco discussed inner functions on Dirichlet-type spaces and characterized such functions as those whose norm and multiplier norm are both equal to one. Cheng, Mashreghi and Ross \cite{ChengArXiv2018} introduced and studied the notion of inner functions with respect to a bounded linear operator. Using their language, our $\Ho$-inner functions are their inner functions with respect to the operator of multiplication by $z$. In a recent paper B\'en\'eteau at. al. \cite{BeneteauAMP2019} investigated inner functions on general simply connected domains in the complex plane.

In this note we first obtain several characterizations of inner functions on weighted Hardy spaces $\Ho$, which extend Seco's results. We then describe a construction of $\Ho$-analogues of finite Blaschke products. Our result provides a complete picture of inner functions that are finite linear combinations of kernel functions. In the last section, we discuss a construction of inner functions via reproducing kernels of certain weighted $\Ho$-spaces.

\section{Some characterizations of inner functions}
\label{S:characterization}

Inner functions are closely related to solutions of a certain extremal problem that has been considered by several authors \cite{AlemanActaMath1996,BeneteauCMB2017,HedenmalmJRAM1991}.
Recall that a closed subspace $\calM$ of $\Ho$ is $z$-invariant if $z\calM\subset\calM$. For a function $f\in\Ho$, we will write $[f]$ (or $[f]_{\Ho}$ if there may be a confusion) for the $z$-invariant subspace generated by $f$, which is the closure of all polynomial multiples of $f$ in the norm of $\Ho$.

Let $\{0\}\neq\calM\subsetneq\Ho$ be $z$-invariant. Let $d\geq 0$ be the smallest integer such that $z^d\notin\calM^{\perp}$. Consider the extremal problem
\begin{align}
\label{Eqn:optimal_real_part}
\sup\Big\{\real(g^{(d)}(0)): g\in\calM,\, \|g\|\leq 1\Big.\}
\end{align}
Since $g^{(d)}(0)$ is a positive constant multiple of $\inner{g}{z^d}$, the above extremal problem is equivalent to
\begin{align}
\label{Eqn:optimal_inner}
\sup\Big\{\real(\inner{g}{z^d}): g\in\calM,\, \|g\|\leq 1\Big\}.
\end{align}
It follows from general Hilbert space theory that the problem \eqref{Eqn:optimal_inner} has a unique solution, which is exactly $P_{\calM}(z^d)/\|P_{\calM}(z^d)\|$, where $P_{\calM}(z^d)$ denotes the orthogonal projection of $z^d$ onto $\calM$. The following theorem shows that such projection produces an $\Ho$-inner function and that each $\Ho$-inner function arises in such a fashion. This result is known in many spaces and has appeared in the literature in different forms. The case $d=0$ was detailed in \cite[Theorem 2.2]{BeneteauCMB2017}. The general case is in fact similar and we provide a sketch of the proof.

\begin{theorem}
\label{T:inner_projection_complement}
Let $f$ be a function in $\Ho$ with $\|f\|=1$. Then the following statements are equivalent.
\begin{enumerate}[(a)]
   \item $f$ is an $\Ho$-inner function.
   \item There exist a $z$-invariant subspace $\calM\neq\{0\}$ and an integer $d\geq 0$ such that $\{z^k: 0\leq k\leq d-1\}\subset\calM^{\perp}$, $z^{d}\notin \calM^{\perp}$ and $f$ is a constant multiple of $P_{\calM}[z^d]$.
\end{enumerate}
\end{theorem}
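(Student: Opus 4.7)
The plan is to prove the two implications separately, taking $\calM = [f]$ and $d$ equal to the order of vanishing of $f$ at $0$ for the direction $(a) \Rightarrow (b)$. The underlying arithmetic rests on the fact that in $\Ho$ the monomial basis is orthogonal, so inner products with $z^k$ read off Taylor coefficients directly: $\inner{g}{z^k} = \omega_k g_k$ where $g(z) = \sum g_n z^n$.

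For $(b) \Rightarrow (a)$, I would compute $\inner{z^m f}{f}$ for $m \geq 1$ using two independent orthogonality facts. First, $z$-invariance of $\calM$ puts $z^m f$ in $\calM$, so the orthogonal component $z^d - P_\calM[z^d]$ of the decomposition $z^d = P_\calM[z^d] + (z^d - P_\calM[z^d])$ contributes nothing, and $\inner{z^m f}{f} = |c|^2\,\inner{z^m P_\calM[z^d]}{z^d}$ with $f = c\,P_\calM[z^d]$. Second, the hypothesis $\{z^k : 0 \leq k \leq d-1\} \subset \calM^\perp$ forces every element of $\calM$ (and hence $P_\calM[z^d]$) to have vanishing Taylor coefficients of order $< d$, so $z^m P_\calM[z^d]$ is supported on powers of degree $\geq d+m > d$. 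Therefore $\inner{z^m P_\calM[z^d]}{z^d} = 0$, giving inner-ness.

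For $(a) \Rightarrow (b)$, write $f = a_d z^d + a_{d+1} z^{d+1} + \cdots$ with $a_d \neq 0$. Every polynomial multiple $pf$ still vanishes to order $\geq d$ at the origin, and passing to norm limits extends this to all of $\calM = [f]$, verifying that $\{z^k : 0 \leq k \leq d-1\} \subset \calM^\perp$; meanwhile $\inner{f}{z^d} = \omega_d a_d \neq 0$, so $z^d \notin \calM^\perp$. To identify $f$ (up to a constant) with $P_\calM[z^d]$, I would compute the two relevant inner products on the dense set of $pf$: since only the constant term of $p$ can contribute to the coefficient of $z^d$ in $pf$, we have $\inner{pf}{z^d} = \omega_d a_d\,p(0)$, while the inner-function property gives $\inner{pf}{f} = p(0)$. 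Comparing, $\inner{g}{z^d} = \inner{g}{\omega_d \overline{a_d}\,f}$ for all $g \in \calM$, and since $\omega_d \overline{a_d}\,f \in \calM$, uniqueness of the Riesz representer on $\calM$ yields $P_\calM[z^d] = \omega_d \overline{a_d}\,f$.

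The main obstacle I anticipate is purely bookkeeping in the second direction: one must carefully translate the abstract identity $\inner{pf}{f} = p(0)$ into a projection identity, keeping track of the antilinear slot in the sesquilinear form so that the correct scalar $\omega_d \overline{a_d}$ appears. Everything else is direct computation with Taylor coefficients and orthogonality of monomials.
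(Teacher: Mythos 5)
Your proof is correct and follows essentially the same route as the paper: for (a)$\Rightarrow$(b) you take $\calM=[f]$ with $d$ the order of vanishing at $0$ and verify $P_{[f]}(z^d)=\omega_d\overline{a_d}\,f=\inner{z^d}{f}f$, and for (b)$\Rightarrow$(a) you combine the same two orthogonality facts ($z^mf\in\calM\perp z^d-P_\calM[z^d]$, and $z^mP_\calM[z^d]$ vanishing to order $>d$) that the paper uses. The only difference is cosmetic: you phrase the second orthogonality in terms of Taylor coefficients where the paper writes $u=z^dv$, and you carry out explicitly the identification of the projection that the paper leaves as "it can then be verified."
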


\begin{proof}
Suppose first $f$ is $\Ho$-inner. Let $d\geq 0$ be the largest integer for which $z^d$ divides $f$. It can then be verified that $\inner{z^d}{f}f=P_{[f]}(z^d)$, which shows that (b) holds with $\calM=[f]$.

Conversely, suppose that (b) holds. 
Put $u=P_{\calM}(z^d)$. Then $u\in\calM$ and $z^d-u\in\calM^{\perp}$. For any integer $m\geq 1$, since $\calM$ is $z$-invariant, the function $z^{m}u$ belongs to $\calM$. It follows that
\begin{align}
\label{Eqn:orthogonalityA}
0  = \inner{z^mu}{z^d-u} = \inner{z^{m}u}{z^d} - \inner{z^mu}{u}.
\end{align}
On the other hand, by assumption, $z^d$ divides any function in $\calM$ so we may write $u=z^dv$ for some $v$ holomorphic on the unit disk. Consequently,
\begin{align}
\label{Eqn:orthogonalityB}
\inner{z^mu}{z^d} = \inner{z^{m+d}v}{z^d} = 0
\end{align}
since $m\geq 1$. It follows from \eqref{Eqn:orthogonalityA} and \eqref{Eqn:orthogonalityB} that $\inner{z^mu}{u}=0$, hence $\inner{z^mf}{f}=0$ for all integers $m\geq 1$. By the fact that $f$ is a unit vector, (a) follows.
\end{proof}

Inner functions on the Bergman space have been very well studied. See \cite[Chapter~5]{DurenAMS2004} and \cite[Chapter~3]{HedenmalmSpringer2000} for an excellent treatment of inner functions even in the general setting of $A^p$ spaces. It is well known (see \cite[Section~5.2]{DurenAMS2004} and \cite[Section~3.4]{HedenmalmSpringer2000}) that each $A^2$-inner function has an expansive multiplier property. That is, if $f$ is an $A^2$-inner function, then
\[\|p\|_{A^2}\leq \|pf\|_{A^2}\] for all polynomials $p$. Ball and Bolotnikov\cite{BallCM2016} exhibited sufficient conditions on the weight sequence $\omega$ for which any inner function in $\Ho$ has the expansive multiplier property.

In \cite{RichterJOT1992}, among other things, Richter and Sundberg found a close connection between inner functions on the Dirichlet space and its multipliers: if $f$ is $\calD$-inner, then
\[\|f\|_{\mult(\calD)}=1.\]
In particular, each $\calD$-inner function is a contractive multiplier. This is quite different from $A^2$-inner functions.

Seco \cite{SecoCAOT2019} recently obtained the following interesting characterization of inner functions on Dirichlet-type spaces generated by rotation-invariant measures $\mu$.
\begin{theorem}[\cite{SecoCAOT2019}]
\label{T:Seco}
Let $f\in\calD_{\mu}=\Ho$. Then $f$ is $\calD_{\mu}$-inner whenever
\begin{enumerate}[(a)]
  \item $\|f\|_{\calD_{\mu}}=1$ and for all integers $k\geq 1$ and all $\lambda\in\C$, \[\|fg_{k,\lambda}\|^2_{\calD_{\mu}} \leq\omega_k + |\lambda|^2,\] where $g_{k,\lambda}(z)=z^k+\lambda$, and this holds true whenever
   \item $\|f\|_{\calD_{\mu}}=\|f\|_{\mult(\calD_{\mu})}=1$.   
\end{enumerate}
\end{theorem}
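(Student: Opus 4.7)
The plan is to prove the chain of implications $(b)\Rightarrow(a)\Rightarrow f$ is $\calD_{\mu}$-inner. The first implication is essentially a tautology: the condition $\|f\|_{\mult(\calD_{\mu})}=1$ means that $M_{f}$ is a contraction on $\calD_{\mu}$, so $\|fg\|_{\calD_{\mu}}\leq \|g\|_{\calD_{\mu}}$ for every $g\in\calD_{\mu}$. Specializing to $g=g_{k,\lambda}=z^{k}+\lambda$ and using $\omega_{0}=1$ gives $\|g_{k,\lambda}\|^{2}=\omega_{k}+|\lambda|^{2}$, which is precisely (a).

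For the main implication $(a)\Rightarrow f$ inner, I would expand the norm square using $\|f\|=1$. Writing $fg_{k,\lambda}=z^{k}f+\lambda f$, one computes
\[
\|fg_{k,\lambda}\|^{2}=\|z^{k}f\|^{2}+2\real\bigl(\bar{\lambda}\inner{z^{k}f}{f}\bigr)+|\lambda|^{2}.
\]
The hypothesis $\|fg_{k,\lambda}\|^{2}\leq \omega_{k}+|\lambda|^{2}$ therefore rearranges to
\[
\|z^{k}f\|^{2}+2\real\bigl(\bar{\lambda}\inner{z^{k}f}{f}\bigr)\leq\omega_{k}\qquad\text{for all }\lambda\in\C.
\]
Setting $c_{k}=\inner{z^{k}f}{f}$ and choosing $\lambda=tc_{k}$ with $t>0$, the middle term becomes $2t|c_{k}|^{2}$, which is unbounded in $t$ unless $c_{k}=0$. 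Hence $\inner{z^{k}f}{f}=0$ for every $k\geq 1$, so $f$ is $\calD_{\mu}$-inner by definition.

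There is no real obstacle here; the proof is a short and direct manipulation. The conceptual content is the observation that the family $\{g_{k,\lambda}\}_{\lambda\in\C}$ provides enough freedom, through the linear-in-$\lambda$ cross term, to isolate and annihilate the inner product $\inner{z^{k}f}{f}$, while the $|\lambda|^{2}$ terms on both sides cancel exactly because $\|f\|=1$ and $\omega_{0}=1$. This is also the reason the test functions are written in the specific form $z^{k}+\lambda$ rather than something more general.
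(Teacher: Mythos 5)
Your proof is correct and follows essentially the same route the paper takes: the implication (b)$\Rightarrow$(a) is the trivial contractivity estimate combined with $\|g_{k,\lambda}\|^2=\omega_k+|\lambda|^2$, and the implication (a)$\Rightarrow$inner is exactly the computation $\|fg_{k,\lambda}\|^2-|\lambda|^2=\|z^kf\|^2+2\real\big(\bar{\lambda}\inner{z^kf}{f}\big)$ followed by the choice $\lambda=t\inner{z^kf}{f}$, which is precisely how the author proves the (i)$\Leftrightarrow$(ii) equivalence in the generalization (Theorem \ref{T:inner_characterization}). Note also that neither direction you prove needs the expansive shift property, consistent with the fact that the stated theorem only asserts the one-way implications.
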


Note that condition (a) in Theorem \ref{T:Seco} is considerably weaker than the condition $\|f\|_{\mult(\calD_{\mu})}=1$ in (b). As Seco pointed out, his proof of (b)$\Rightarrow$(a)$\Rightarrow \calD_{\mu}$-inner works for any weighted Hardy space with the expansive shift property. On the other hand, for the Dirichlet space, using Richter-Sundberg's result, we see that (a) actually implies (b).

On any $\Ho$, the function $e_k=z^k/\sqrt{\omega_k}$ is always $\Ho$-inner. The multiplication operator $M_{e_k}$ is a weighted shift:
\[M_{e_k}(e_n) = \sqrt{\frac{\omega_{n+k}}{\omega_{n}\,\omega_k}}\,e_{n+k}\quad\text{for all } n\geq 0.\]
Consequently, $e_k$ is a contractive multiplier if and only if
\begin{align}
\label{Eqn:contractive_monomial_inner}
\omega_{n+k}\leq \omega_{n}\,\omega_k\quad\text{ for all integers } n\geq 0.
\end{align}
On the other hand, $e_k$ is an expansive multiplier if and only if
\begin{align}
\label{Eqn:expansive_monomial_inner}
\omega_{n+k}\geq \omega_n\,\omega_k\quad\text{ for all integers } n\geq 0.
\end{align}

\begin{example}
Consider the weight sequence $\omega$ defined by $\omega_1=\sqrt{2}$ and $\omega_n=n+1$ for all $n\neq 1$. This weight sequence has the expansive shift property and it is almost identical to the weight sequence generating the Dirichlet space $\calD$ except the value of $\omega_1$. Since $\omega_2 > \omega_1^2$, the $\Ho$-inner function $e_1(z)=z/\sqrt{\omega_1}$ has multiplier norm bigger than $1$. Indeed,
\begin{align*}
\|z\cdot e_1\|^2_{\Ho}=\Big\|\frac{z^2}{\sqrt{\omega_1}}\Big\|^2_{\Ho} = \frac{\|z^2\|^2_{\Ho}}{\omega_1} = \frac{\omega_2}{\omega_1}>\omega_1 = \|z\|^2_{\Ho}.
\end{align*}
\end{example}

\begin{example}
Now consider the weight sequence $\omega_1=1/\sqrt{2}$ and $\omega_n=1/(n+1)$ for all $n\neq 1$, which consists of the reciprocals of the sequence in the previous example. This weight sequence is almost identical to the weight sequence generating the Bergman space $A^2$ except the value of $\omega_1$. Since $\omega_2 < \omega_1^2$, the $\Ho$-inner function $e_1(z)=z/\sqrt{\omega_1}$ does \textit{not} have the expansive multiplier property.
\end{example}

The above examples show that the expansive and contractive properties are quite sensitive under very small changes in the norms. Interestingly, we show in the following result that on any $\Ho$-space, a function is inner if and only if it satisfies a certain modified expansive and contractive multiplier property. We indeed obtain several equivalent characterizations of $\Ho$-inner functions. 

For any integer $k\geq 1$ and any complex number $\zeta\in\C$, denote $g_{k,\lambda}(z)=z^k+\lambda$ for $z\in\C$.

\begin{theorem}
\label{T:inner_characterization}
Let $f$ be an element of $\Ho$. Then the following statements are equivalent.
\begin{enumerate}[(i)]
   \item $f$ is $\Ho$-inner.
   \item $\|f\|=1$ and for any integer $k\geq 1$, one of the following conditions holds:
       \begin{enumerate}[(C1)]
         \item There exists a constant $C_k$ such that
   \[\|fg_{k,\lambda}\|^2 \leq C_k+|\lambda|^2\quad\text{for all }\lambda\in\C.\]
         \item There exists a constant $D_k$ such that
   \[\|fg_{k,\lambda}\|^2 \geq D_k+|\lambda|^2\quad\text{for all }\lambda\in\C.\]
       \end{enumerate}
   \item $\|f\|=1$ and for any polynomial $p\in\C[z]$, we have
   \[|p(0)| \leq \|pf\|.\]
\end{enumerate} 
\end{theorem}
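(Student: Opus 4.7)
The plan is to establish the cycle by proving (i)$\Leftrightarrow$(ii) and (i)$\Leftrightarrow$(iii). The central computational tool, valid whenever $\|f\|=1$, is the polarisation identity
\[
\|fg_{k,\lambda}\|^2 \;=\; \|z^k f\|^2 \;+\; 2\,\real\bigl(\bar\lambda\,\inner{z^k f}{f}\bigr) \;+\; |\lambda|^2,
\]
obtained by expanding $\|z^k f+\lambda f\|^2$ using sesquilinearity.

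For (i)$\Rightarrow$(ii), if $f$ is $\Ho$-inner then $\inner{z^k f}{f}=0$ for every $k\geq 1$, so the identity collapses to $\|fg_{k,\lambda}\|^2=\|z^k f\|^2+|\lambda|^2$; hence both (C1) and (C2) hold with $C_k=D_k=\|z^k f\|^2$. For the converse (ii)$\Rightarrow$(i), fix $k\geq 1$ and set $\alpha_k=\inner{z^k f}{f}$. If (C1) holds then the identity gives $2\,\real(\bar\lambda\,\alpha_k)\leq C_k-\|z^k f\|^2$ for every $\lambda\in\C$; choosing $\lambda=t\alpha_k$ with $t\to+\infty$ yields $2t|\alpha_k|^2\leq C_k-\|z^k f\|^2$, which forces $\alpha_k=0$. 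If instead (C2) holds, the same substitution with $t\to-\infty$ gives the same conclusion. Since $k$ was arbitrary, $f$ is inner.

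For (i)$\Rightarrow$(iii), inner-ness is equivalent to $\inner{pf}{f}=p(0)$ for every polynomial $p$. Cauchy--Schwarz then gives
\[
|p(0)|^2 \;=\; \bigl|\overline{p(0)}\,\inner{pf}{f}\bigr| \;=\; \bigl|\inner{pf}{p(0)f}\bigr| \;\leq\; \|pf\|\cdot|p(0)|\cdot\|f\| \;=\; \|pf\|\cdot|p(0)|,
\]
so $|p(0)|\leq\|pf\|$ (trivially when $p(0)=0$). Finally, for (iii)$\Rightarrow$(i), fix $m\geq 1$ and apply (iii) to $p(z)=1+\lambda z^m$, for which $p(0)=1$. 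The defining inequality becomes
\[
1 \;\leq\; \|pf\|^2 \;=\; 1 \;+\; 2\,\real\bigl(\bar\lambda\,\inner{z^m f}{f}\bigr) \;+\; |\lambda|^2\,\|z^m f\|^2,
\]
so the real-linear term $2\,\real(\bar\lambda\,\inner{z^m f}{f})$ dominated by $|\lambda|^2\|z^m f\|^2$ must itself be nonnegative for every $\lambda\in\C$; testing this with small $\lambda=-t\,\inner{z^m f}{f}$ ($t>0$ small) forces $\inner{z^m f}{f}=0$.

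The step that deserves care is (ii)$\Rightarrow$(i): the hypothesis provides only a one-sided bound and the weight sequence plays no role, so the argument must exploit the single structural fact that the map $\lambda\mapsto 2\,\real(\bar\lambda\,\inner{z^k f}{f})$ is real-linear in $\lambda\in\C$, hence unbounded above \emph{and} below unless $\inner{z^k f}{f}=0$. Everything else is a direct application of sesquilinearity and Cauchy--Schwarz.
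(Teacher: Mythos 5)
Your proposal is correct and follows essentially the same route as the paper: the same polarisation identity $\|fg_{k,\lambda}\|^2=\|z^kf\|^2+2\real(\bar\lambda\inner{z^kf}{f})+|\lambda|^2\|f\|^2$ with the test substitution $\lambda=t\inner{z^kf}{f}$ for (i)$\Leftrightarrow$(ii), and Cauchy--Schwarz applied to $\inner{pf}{f}=p(0)$ for (i)$\Rightarrow$(iii). The only cosmetic difference is in (iii)$\Rightarrow$(i), where the paper simply observes that (iii) yields condition (C2) with $D_k=0$ and invokes (ii), while you argue directly with $p(z)=1+\lambda z^m$ and small $\lambda$; both are one-line variants of the same idea.
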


\begin{remark}
Since a general $\Ho$-inner function may fail to be an expansive multiplier, we may perhaps consider \textit{(iii)} as an analogue of the expansive multiplier property. While the proof is not difficult, it is quite curious to us that the implication \textit{(iii)}$\Rightarrow$\textit{(i)}, even for the Hardy space, does not seem to appear in the references that we have known.
\end{remark}

\begin{proof}
If $\|f\|=1$, for $k\geq 1$, we compute
\begin{align*}
\varphi_k(\lambda)
& =\|fg_{k,\lambda}\|^2 - |\lambda|^2\\
& = \|z^kf\|^2 + |\lambda|^2\|f\|^2 + 2\Re(\bar{\lambda}\inner{z^kf}{f}) - |\lambda|^2 \\
& = \|z^kf\|^2 + 2\Re(\bar{\lambda}\inner{z^kf}{f}).
\end{align*}
For any real number $t$, setting $\lambda=t\inner{z^kf}{f}$ gives
\[\varphi_k\big(t\inner{z^kf}{f}\big) = \|z^kf\|^2 + 2t\,|\inner{z^kf}{f}|^2.\]
It follows that $\varphi_k$ is bounded above or bounded bellow if and only if $\inner{z^kf}{f}=0$. This shows that $(i)$ and $(ii)$ are equivalent.

We now prove the implication $(i)\Rightarrow(iii)$. Assume that $f$ is $\Ho$-inner. Let $p(z)=a_0+a_1z+\cdots+a_nz^n$ be any polynomial. We then have
\begin{align*}
\inner{pf}{f} & = \sum_{j=0}^{n}a_j\inner{z^jf}{f} = a_0 = p(0).
\end{align*}
Cauchy-Schwarz's inequality implies
\begin{align*}
|p(0)| = |\inner{pf}{f}| \leq\|pf\|\cdot\|f\| = \|pf\|.
\end{align*}
Therefore, $(i)$ implies $(iii)$. On the other hand, if $(iii)$ holds, then for each integer $k\geq 1$, condition (C2) is satisfied with $D_k=0$. Consequently, $(i)$ holds.
\end{proof}

\begin{corollary} Let $f$ be an element of $\Ho$. Then $f$ is $\Ho$-inner if and only if for any $k\geq 1$, there exist constants $C_k$ and $D_k$ such that 
\begin{align}
\label{Eqn:inner_boundedness}
D_k+|\lambda|^2 \leq \|fg_{k,\lambda}\|^2 \leq C_k+|\lambda|^2 \quad\text{ for all } \lambda\in\C.
\end{align}
\end{corollary}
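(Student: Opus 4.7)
The plan is to observe that this corollary is essentially a packaging of Theorem \ref{T:inner_characterization}: the two-sided inequality in \eqref{Eqn:inner_boundedness} is just the conjunction of conditions (C1) and (C2) from that theorem. So the main work is to check the forward direction by a direct computation, and to verify that in the reverse direction the two-sided bound actually forces $\|f\|=1$ (since that hypothesis was assumed in the theorem but is not assumed in the corollary statement).

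For the forward direction, I would assume $f$ is $\Ho$-inner, so that $\|f\|=1$ and $\inner{z^k f}{f}=0$ for all $k\geq 1$. Expanding as in the proof of Theorem \ref{T:inner_characterization},
\[
\|fg_{k,\lambda}\|^2 = \|z^k f\|^2 + 2\Re\bigl(\bar\lambda\inner{z^k f}{f}\bigr) + |\lambda|^2\|f\|^2 = \|z^k f\|^2 + |\lambda|^2,
\]
so both inequalities hold with $C_k = D_k = \|z^k f\|^2$.

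For the reverse direction, I would suppose \eqref{Eqn:inner_boundedness} holds for each $k\geq 1$ and rewrite
\[
\|fg_{k,\lambda}\|^2 - |\lambda|^2 = \|z^k f\|^2 + 2\Re\bigl(\bar\lambda\inner{z^k f}{f}\bigr) + |\lambda|^2(\|f\|^2 - 1).
\]
Letting $|\lambda|\to\infty$ along real $\lambda$, the quadratic term $|\lambda|^2(\|f\|^2-1)$ dominates the linear term $2\Re(\bar\lambda\inner{z^k f}{f})$; so if $\|f\|^2 > 1$ the expression is unbounded above, contradicting the upper bound $C_k$, and if $\|f\|^2 < 1$ it is unbounded below, contradicting the lower bound $D_k$. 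Hence $\|f\|=1$. With $\|f\|=1$ in hand, the two-sided bound in particular gives condition (C1) (or (C2)) of Theorem \ref{T:inner_characterization}, so that theorem yields that $f$ is $\Ho$-inner. There is no real obstacle here — the only mildly delicate point is remembering to extract $\|f\|=1$ from the bounds rather than assuming it.
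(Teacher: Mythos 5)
Your proposal is correct and follows essentially the same route as the paper: both reduce the corollary to Theorem \ref{T:inner_characterization} and extract $\|f\|=1$ from the asymptotics of $\|fg_{k,\lambda}\|^2$ as $|\lambda|\to\infty$ (the paper divides by $|\lambda|^2$ and takes a limit, you observe the quadratic term $|\lambda|^2(\|f\|^2-1)$ would violate the two-sided bound unless it vanishes — the same computation). The explicit forward direction with $C_k=D_k=\|z^kf\|^2$ is a harmless addition already implicit in the theorem.
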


\begin{proof}
By Theorem \ref{T:inner_characterization}, we only need to show that if \eqref{Eqn:inner_boundedness} holds, then $\|f\|=1$. Note that
\[\lim_{|\lambda|\to\infty}\frac{\|fg_{k,\lambda}\|^2}{|\lambda|^2} = \lim_{|\lambda|\to\infty}\frac{\|z^kf\|^2 + 2\Re(\bar{\lambda}\inner{z^kf}{f})}{|\lambda|^2} + \|f\|^2 = \|f\|^2.\]
It follows that if \eqref{Eqn:inner_boundedness} holds for some integer $k\geq 1$, then $\|f\|=1$.
\end{proof}

In the case $f$ is a multiplier of $\Ho$, the following result provides a necessary and sufficient condition for $f$ to be an inner function via the action of the operator $M_f^{*}M_f$ on constant functions.

\begin{theorem}
\label{T:inner_operator_char}
Let $f$ be a multiplier of $\Ho$. Then the following statements are equivalent.
\begin{itemize}
   \item[(a)] $f$ is $\Ho$-inner.
   \item[(b)] $M_{f}^{*}M_{f}1=1$.
   \item[(c)] There exists a positive operator $A$ on $z\Ho$ such that
   \[M_{f}^{*}M_{f} = I_{\C\mathbf{1}}\oplus A,\] where $I_{\C\mathbf{1}}$ denotes the identity operator on the one-dimensional space of constant functions.
\end{itemize}
\end{theorem}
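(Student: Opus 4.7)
The plan is to reduce both equivalences to a single vector identity, namely that $T := M_f^*M_f$ acts as the identity on the constant function $\mathbf{1}$. As preliminaries, I would record that $\omega_0=1$ forces $\mathbf{1}=e_0$ to be a unit vector with $\inner{\mathbf{1}}{z^m}=\delta_{m,0}$, and that the orthogonal decomposition $\Ho=\C\mathbf{1}\oplus z\Ho$ holds, where $z\Ho=\overline{\lspan}\{e_n:n\geq 1\}$ is the closed subspace of functions vanishing at the origin. Since $f$ is a multiplier, $T$ is bounded, positive, and self-adjoint.

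For $(a)\Leftrightarrow(b)$, I would test $T\mathbf{1}$ against the orthonormal basis via the computation
\[\inner{T\mathbf{1}}{z^m}=\inner{M_f\mathbf{1}}{M_f z^m}=\inner{f}{z^mf}=\overline{\inner{z^mf}{f}}\quad\text{for every }m\geq 0.\]
Because $\{z^m/\sqrt{\omega_m}\}_{m\geq 0}$ is an orthonormal basis of $\Ho$, the equation $T\mathbf{1}=\mathbf{1}$ is equivalent to demanding $\inner{z^mf}{f}=\delta_{m,0}$ for every $m\geq 0$. The case $m=0$ encodes $\|f\|=1$ and the cases $m\geq 1$ are precisely the $\Ho$-inner orthogonality relations, so (a) and (b) coincide.

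The implication $(c)\Rightarrow(b)$ is immediate from $\mathbf{1}\in\C\mathbf{1}$. For $(b)\Rightarrow(c)$, I would argue that (b) makes $\C\mathbf{1}$ a $T$-invariant subspace on which $T$ restricts to the identity; self-adjointness of $T$ then forces $z\Ho=(\C\mathbf{1})^\perp$ to be $T$-invariant as well. Setting $A=T|_{z\Ho}$ and inheriting positivity from $T$ gives the decomposition $T=I_{\C\mathbf{1}}\oplus A$.

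No step poses a serious obstacle: the real content is the pleasant observation that $\Ho$-innerness, a condition involving countably many orthogonality relations for $f$, collapses to a single vector equation once one passes from $M_f$ to $M_f^*M_f$ and tests on the cyclic vector $\mathbf{1}$.
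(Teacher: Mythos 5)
Your proof is correct and follows essentially the same route as the paper: test $M_f^*M_f\mathbf{1}$ against the orthonormal basis $\{z^m/\sqrt{\omega_m}\}$ to reduce (a)$\Leftrightarrow$(b) to the identity $\inner{f}{z^mf}=\delta_{m,0}$, then use the decomposition $\Ho=\C\mathbf{1}\oplus z\Ho$ for (b)$\Leftrightarrow$(c). You merely spell out the invariance/self-adjointness argument for (b)$\Rightarrow$(c) that the paper leaves as a one-line remark, which is a harmless (and welcome) elaboration.
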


\begin{proof}
Recall that $\{e_{m}(z)=z^m/\sqrt{\omega_m}: m=0,1,\ldots\}$ is an orthonormal basis for $\Ho$. Let us compute
\begin{align*}
M_{f}^{*}M_{f}1 & = \sum_{m=0}^{\infty}\inner{M_{f}^{*}M_{f}1}{e_m}e_m = \sum_{m=0}^{\infty}\inner{M_f1}{M_fz^m}\frac{z^m}{\omega_m}\\
& = \sum_{m=0}^{\infty}\inner{f}{z^mf}\frac{z^m}{\omega_m}.
\end{align*}
It follows that $M_{f}^{*}M_{f}1=1$ if and only if $\|f\|=1$ and $\inner{f}{z^mf}=0$ for all $m\ge 1$. Therefore,  (a) and (b) are equivalent. On the other hand, since $\Ho=\C\mathbf{1}\oplus z\Ho$, we see that (b) and (c) are equivalent.   
\end{proof}

\begin{remark}
When $\Ho$ is the Hardy space or one of the weighted Bergman spaces over the disk, the operator $M_{f}^{*}M_{f}$ can be identified as the Toeplitz operator $T_{|f|^2}$. Theorem \ref{T:inner_operator_char} asserts that in such spaces, a function $f$ is inner if and only if constant functions belong to $\ker(I-T_{|f|^2})$. On the Hardy space, this is equivalent to the fact that $|f|=1$ a.e. on the unit circle. This, of course, does not hold true for inner functions on the Bergman nor any general $\Ho$ space.
\end{remark}

\begin{remark} Statement (b) in Theorem \ref{T:inner_operator_char} essentially says that $M_{f}^{*}M_{f}$ is the identity operator on the subspace of constant functions. We mention in passing that an analogous characterization of operator-valued inner functions can also be obtained with a similar proof. 
\end{remark}

\section{Construction of finite $\Ho$-Blaschke products}
\label{S:construction}

In this section, we shall develop a method to construct analogues of finite Blaschke products. These functions turn out to be multipliers of $\Ho$. We provide here a short discussion which shows that under the assumption \eqref{Eqn:weight_condition} on the weight sequences, many \textit{nice} functions serve as multipliers of $\Ho$. We have seen in the introduction that any polynomial is a multiplier of $\Ho$. More is true as we shall see below.

Recall from \eqref{Eqn:Mz} that
\[M_{z}(e_n) = \sqrt{\frac{\omega_{n+1}}{\omega_n}}\,e_{n+1}\ \text{ for all } n\geq 0.\]
It follows that for all $m\geq 1$,
\[M_{z^m}(e_n) = \sqrt{\frac{\omega_{n+m}}{\omega_n}}\,e_{n+m}\ \text{ for all } n\geq 0\]
and hence,
\[\|M_{z^m}\| = \sup\Big\{\sqrt{\frac{\omega_{n+m}}{\omega_n}}: n\geq 0\Big\}.\]
Condition \eqref{Eqn:weight_condition} then implies
\begin{align}
\label{Eqn:limit_norm_Mzm}
\lim_{m\to\infty}\|M_{z^m}\|^{1/m} = \lim_{m\to\infty}\Big(\sup\Big\{\sqrt{\frac{\omega_{n+m}}{\omega_n}}: n\geq 0\Big\}\Big)^{1/m} = 1.
\end{align}

\begin{lemma}
\label{L:multiplier}
Let $\varphi$ be a function that is holomorphic on a disk centered at the origin with radius strictly larger than one. Then $\varphi$ is a multiplier of $\Ho$. 
\end{lemma}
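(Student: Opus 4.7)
The plan is to expand $\varphi$ as a power series and realize $M_\varphi$ as a norm-convergent series of the operators $M_{z^m}$, whose norms are controlled by \eqref{Eqn:limit_norm_Mzm}.

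First I would fix the hypothesis quantitatively: $\varphi$ is holomorphic on $|z|<R$ for some $R>1$, so its Taylor series $\varphi(z)=\sum_{m=0}^{\infty}c_m z^m$ satisfies $\limsup_{m\to\infty}|c_m|^{1/m}\le 1/R<1$. Combined with \eqref{Eqn:limit_norm_Mzm}, which tells us $\|M_{z^m}\|^{1/m}\to 1$, this yields
\[
\limsup_{m\to\infty}\bigl(|c_m|\,\|M_{z^m}\|\bigr)^{1/m}\le 1/R<1,
\]
so $\sum_{m=0}^{\infty}c_m M_{z^m}$ converges in the operator norm on $\Ho$. Call the resulting bounded operator $T$.

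Next I need to identify $T$ with $M_\varphi$. For a polynomial $p$, only finitely many terms act nontrivially, and a direct computation shows $Tp=\sum_m c_m z^m p$ converges in $\Ho$ to the pointwise product $\varphi p$, which lies in $\Ho$. For a general $f\in\Ho$, I would approximate $f$ in norm by its partial sums $f_N=\sum_{n=0}^{N}a_n z^n$, so $Tf_N\to Tf$ in $\Ho$. Since $\Ho$ is a reproducing kernel Hilbert space, norm convergence implies pointwise convergence on $\D$, hence $(Tf_N)(z)\to (Tf)(z)$ for each $z\in\D$. On the other hand $f_N(z)\to f(z)$ pointwise, so $\varphi(z)f_N(z)\to \varphi(z)f(z)$ pointwise, and since $Tf_N=\varphi f_N$ we conclude $(Tf)(z)=\varphi(z)f(z)$ for every $z\in\D$. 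Thus $\varphi f\in\Ho$ with $\|\varphi f\|\le \|T\|\,\|f\|$, proving $\varphi$ is a multiplier.

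The only step requiring any care is the operator-norm convergence of $\sum c_m M_{z^m}$, and the content of that step is already packaged in \eqref{Eqn:limit_norm_Mzm}; so I do not anticipate a genuine obstacle here. Everything else is a standard RKHS density argument.
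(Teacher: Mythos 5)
Your proposal is correct and follows essentially the same route as the paper: expand $\varphi$ in a Taylor series, use \eqref{Eqn:limit_norm_Mzm} and the root test to get operator-norm convergence of $\sum_m c_m M_{z^m}$, and conclude $M_\varphi$ is bounded. The only difference is that you spell out the identification of the limit operator with $M_\varphi$ via the RKHS density argument, a detail the paper leaves implicit.
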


\begin{proof}
Write
\[\varphi(z) = \sum_{m=0}^{\infty}b_mz^m.\] 
The hypothesis implies that $\ds\rho=\limsup_{m\to\infty}|b_m|^{1/m}<1$. Using this together with \eqref{Eqn:limit_norm_Mzm}, we conclude
\[\limsup_{m\to\infty}(|b_m|\,\|M_{z^m}\|)^{1/m} = \rho < 1.\]
Consequently,
\[\sum_{m=0}^{\infty}|b_m|\,\|M_{z^m}\|<\infty,\]
which implies that the series $\sum_{m=0}^{\infty}b_m M_{z^m}$ converges to a bounded operator on $\Ho$.  This shows that $M_{\varphi}$ is bounded as desired.
\end{proof}

Using Shapiro-Shields's determinants \cite{ShapiroMZ1962}, Beneteau et al. \cite{BeneteauCMB2017} described a method of constructing $\Ho$-inner functions with a prescribed finite set of simple zeros in the disk. They call such inner functions analogues of finite Blaschke products. In this section, we investigate this problem with a different approach. We consider inner functions with a prescribed finite set of zeros (with finite multiplicities) and we shall explain why these functions should be considered natural analogues of finite Blaschke products. Our approach makes use of reproducing kernel functions.

Recall that for each $\lambda\in\D$, we use $K_{\lambda}$  to denote the reproducing kernel function of $\Ho$ at $\lambda$. For $\Ho=H^2$, we have
\[K^{H^2}_{\lambda}(z) = \frac{1}{1-\overline{\lambda}z}\quad\text{for } z\in\D\] and for $\Ho=A^2$, we have
\[K^{A^2}_{\lambda}(z) = \frac{1}{(1-\overline{\lambda}z)^2}\quad\text{for } z\in\D.\]

Any finite Blaschke product with simple non-zero roots is a unimodular constant multiple of
\[b(z) = z^{d}\cdot\frac{z-z_1}{1-\bar{z}_1z}\cdots\frac{z-z_s}{1-\bar{z}_sz},\]
where $d\geq 0$ and $z_1,\ldots, z_s$ are distinct points in $\D\backslash\{0\}$. Such $b$ is $H^2$-inner. How would one construct an analogue of $b$ in more general $\Ho$? Our approach is to use the partial fractions decomposition of $b$:
\begin{align}
\label{Eqn:partial_decomp_Blacshke}
b(z) & = p(z) + \frac{c_1}{1-\bar{z}_1z} + \cdots+\frac{c_s}{1-\bar{z}_sz}\notag\\
& = p(z) + c_1K^{H^2}_{z_1}(z) + \cdots + c_sK^{H^2}_{z_s}(z).
\end{align}
where $p$ is a polynomial of degree $d$ and $c_1,\ldots, c_s$ are non-zero complex values. Motivated by such a decomposition, we seek $\Ho$-inner functions that can be written as a finite combination of kernel functions. It turns out that such inner functions can be computed using Shapiro-Shield's determinants, which recovers Beneteau et al.'s results. To illustrate our ideas, let us consider several examples.

\begin{example}
\label{E:monimial_Blaschke}
It follows directly from the definition that for each integer $d\geq 1$, the function $z^d/\sqrt{\omega_d}$ is $\Ho$-inner. Such a function is an $\Ho$-analogue of the $H^2$-inner function $z^d$.
\end{example}

\begin{example}
\label{E:single_Blaschke_factor} 
Let $a\in\D\backslash\{0\}$. We seek $\Ho$-analogue of the Blaschke factor \[b_1(z)=\frac{z-a}{1-\bar{a}z}.\]
To do that, we need to determine two non-zero constants $\alpha,\beta\in\C$ such that
$B_1 = \alpha + \beta K_{a}$ is an $\Ho$-inner function. The conditions for $B_1$ to be inner are $\|B_1\|=1$ and 
\begin{align*}
\inner{\alpha z^j + \beta z^j K_{a}}{\alpha + \beta K_{a}} = 0 \quad\text{ for all } j\geq 1. 
\end{align*}
Since $\inner{z^j}{1}=0$ and $K_{a}$ is the reproducing kernel at $a$, we then obtain
\[\alpha\overline{\beta} a^j + \beta\overline{\beta} a^{j} K_{a}(a) = 0,\]
which implies $\alpha = -\beta K_{a}(a)$. Consequently, $B_1$ is a constant multiple of the function $K_a(a) - K_a$. That is, any $\Ho$-analogue of $b_1$ is of the form
\[B_1=\mu\frac{K_a(a)-K_a}{\|K_a(a)-K_a\|} = \mu\frac{K_a(a)-K_a}{\sqrt{K_a(a)(K_a(a)-1)}},\] where $|\mu|=1$. Of course, this is very well known. See, for example \cite[p.~125]{DurenAMS2004}.

Note that $B_1(a)=0$. Does $B_1$ admit any extraneous zeros in the closed unit disk? By the formula for the reproducing kernel, we have
\begin{align*}
K_a(a)-K_a(z) & = \sum_{n=1}^{\infty}\frac{\bar{a}^n(a^n-z^n)}{\omega_n} = \bar{a}(a-z)\Big(\frac{1}{\omega_1} + \sum_{n\geq 2}\frac{\bar{a}^{n-1}(a^n-z^n)}{(a-z)\omega_n}\Big)
\end{align*}
Consequently,
\begin{align*}
\Big|K_a(a)-K_a(z)\Big| \geq |a|\cdot|a-z|\cdot\Big(\frac{1}{\omega_1} - |a|\sum_{j=0}^{\infty}\frac{(j+2)|a|^{j}}{\omega_{j+2}}\Big).
\end{align*}
It follows that there exists $\delta>0$ such that for all $|a|<\delta$ and $|z|\leq 1$,
\[\Big|K_a(a)-K_a(z)\Big| \geq \frac{|a|}{2\omega_1}|a-z|.\]
This implies that $a$ is the unique zero of $B_1$ in the closed unit disk provided that $|a|<\delta$.

When $\Ho$ is the Hardy space or the Bergman space, it follows from the explicit formula of the reproducing kernel functions that $B_1$ does not have any other zero except $a$ and this is true for any $a\in\D$. Surprisingly, for certain weighted Bergman spaces, $B_1$ does possesses extraneous zeros different from $a$. In general, Hedenmalm and Zhu \cite{HedenmalmCVTA1992} and Weir \cite{WeirPAMS2002, WeirPJM2003} showed that  many properties of (un-weighted) Bergman space inner functions may not hold in the setting of weighted Bergman spaces. 

\end{example}

\begin{example}
\label{E:two_root_Blaschke}
Let $a\in\D\backslash\{0\}$. We now seek $\Ho$-analogue of the Blaschke factor \[b_2(z) = z\cdot\frac{z-a}{1-\bar{a}z}.\]
We need to find three constants $\alpha, \beta,\gamma\in\C$, where $\alpha$ and $\gamma$ are non-zero such that $g=\alpha z + \beta + \gamma K_a$ is $\Ho$-inner. The conditions for $g$ to be inner are given as
\begin{align*}
1 & = \|g\| = \|\alpha z + \beta + \gamma K_a\|,\\
0 & = \inner{zg}{g} =  \bar{\alpha}\inner{zg}{z} + \bar{\gamma}\inner{zg}{K_a} = \bar{\alpha}(\beta + \gamma)\omega_1 + \bar{\gamma}ag(a)\\
0 & = \inner{z^jg}{g} = \inner{z^jg}{\gamma K_a} = \bar{\gamma}a^{j}g(a)\quad\text{for all } j\geq 2.
\end{align*}
The last condition forces $g(a)=0$, which together with the second condition gives $\bar{\alpha}(\beta+\gamma)=0$. Since $\alpha\neq 0$, we conclude that $\beta+\gamma=0$, which is equivalent to $g(0)=0$. It then follows that $g$ is a unimodular multiple of the function
\[B_2(z)=\dfrac{\big(1-K_a(a)\big)z - a(1-K_a)}{\|\big(1-K_a(a)\big)z - a(1-K_a)\|_{\Ho}} = \dfrac{\big(1-K_a(a)\big)z - a(1-K_a)}{\sqrt{(K_a(a)-1)((K_a(a)-1)\omega_1 - |a|^2)}}.\]
\end{example}

\begin{example}
\label{E:repeated_root_Blaschke}
Let $a\in\D\backslash\{0\}$. We look for $\Ho$-analogue of $\big(\frac{z-a}{1-\bar{a}z}\big)^2$. Write
\begin{align*}
\Big(\frac{z-a}{1-\bar{a}z}\Big)^2 & = \frac{1}{\bar{a}^2} + \Big(a^2-\frac{1}{\bar{a}^2}\Big)\cdot\frac{1}{1-\bar{a}z} + \frac{(1-|a|^2)^2}{\bar{a}}\cdot\frac{z}{(1-\bar{a}z)^2}\\
& = \frac{1}{\bar{a}^2} + \Big(a^2-\frac{1}{\bar{a}^2}\Big)K_a^{H^2} + \frac{(1-|a|^2)^2}{\bar{a}}\frac{\partial K_a^{H^2}}{\partial\bar{a}}.
\end{align*}
Consequently, an $\Ho$-analogue should admit the form
\[B(z) = \alpha + \beta K_a + \gamma\frac{\partial K_a}{\partial\bar{a}}.\]
While it is possible to use the calculations similar to the previous examples to determine $\alpha, \beta$ and $\gamma$, we shall discuss this example later after we have proved our result.
\end{example}

\subsection{Gram determinant}
\label{SS:Gram_determinant}
One of our tools to construct analogues of higher order Blaschke products is the notion of the Gram determinant of a finite set of vectors, which we now recall. For a collection of vectors $v_1,\ldots, v_s$ in an inner-product space, let $G(v_1,\ldots,v_s)$ denote the Gram matrix
\[G(v_1,\ldots,v_s) = 
\begin{bmatrix}
\inner{v_1}{v_1} & \cdots & \inner{v_1}{v_s}\\
\vdots & \cdots & \vdots\\
\inner{v_s}{v_1} & \cdots & \inner{v_s}{v_s}
\end{bmatrix}.\]
The Gram determinant is then $\det(G(v_1,\ldots,v_s))$. Since the matrix is positive semi-definite, $\det(G(v_1,\ldots,v_s))\geq 0$. Furthermore, the vectors $\{v_1,\ldots, v_s\}$ are linearly independent if and only if $\det(G(v_1,\ldots,v_s))>0$.

For any vector $v$, we denote by $D(v,v_1,\ldots,v_s)$ the vector
\begin{align}
\label{Eqn:defn_projection}
D(v;\, v_1,\ldots,v_s) = \det\begin{pmatrix}
v & \inner{v}{v_1} & \cdots & \inner{v}{v_s}\\
v_1 & \inner{v_1}{v_1} & \cdots & \inner{v_1}{v_s}\\
\vdots & \vdots & \cdots & \vdots\\
v_s & \inner{v_s}{v_1} & \cdots & \inner{v_s}{v_s}
\end{pmatrix}.
\end{align}
Here the determinant is computed in the usual way even though the first column consists of vectors. Note that $D(v; v_1,\ldots,v_s)\perp v_j$ for all $1\leq j\leq s$ and
\begin{align*}
\|D(v;\,v_1,\ldots,v_s)\|^2 & = \det(G(v_1,\ldots,v_s))\binner{D(v;\,v_1,\ldots,v_s)}{v}.
\end{align*}
If the vectors $\{v_1,\ldots,v_s\}$ are linearly independent so that $\det(G(v_1,\ldots,v_s))>0$, then it is well known that
\[u = \frac{1}{\det(G(v_1,\ldots,v_s))}D(v;\, v_1,\ldots, v_s)\] is the orthogonal projection of $v$ onto the subspace $\{v_1,\ldots,v_s\}^{\perp}$. To see this, note that $u\perp v_j$ for all $1\leq j\leq s$ and $v-u$ is a linear combination of $\{v_1,\ldots, v_s\}$.

\subsection{$\Ho$-Blaschke products}

Recall that the reproducing kernel $K_\lambda(z)$, as a function of two variables, is holomorphic in $z$ and conjugate holomorphic in $\lambda$. For any integer $\ell\geq 0$, define
\[K^{(\ell)}_{\lambda}(z) = \frac{\partial^{\ell}}{\partial\bar{\lambda}^{\ell}}(K(z,\lambda)).\]
Then $K^{(\ell)}_{\lambda}$ is the reproducing kernel for the $\ell$th derivative of functions in $\Ho$ at $\lambda$. That is, for all $h\in\Ho$ and $\lambda\in\D$,
\[h^{(\ell)}(\lambda) = \inner{h}{K^{(\ell)}_{\lambda}}.\]

We denote by $\calX_{\lambda}$ the linear subspace spanned by  the set of linearly independent functions $\{K_\lambda,K^{(1)}_\lambda, K^{(2)}_\lambda,\ldots\}$. For any $f\in\calX_{\lambda}\backslash\{0\}$, there exist a unique integer $m\geq 0$ and constants $c_0,\ldots, c_{m}$ with $c_m\neq 0$ such that $f = \sum_{j=0}^{m}c_jK_\lambda^{(j)}$. We shall call $m$ the degree of $f$. It is clear that $\calX_{0}$ is the space of all polynomials and in that case we recover the usual notion of the degree of a polynomial.

\begin{proposition}
\label{P:linear_independency}
Let $z_1,\ldots, z_s$ be distinct points in $\D$. Then the subspaces $\{\calX_{z_j}:1\leq j\leq s\}$ are mutually linearly independent in the sense that if 
$h_j\in\calX_{z_j}$ and $h_1+\cdots + h_s=0$, then $h_j=0$ for all $j$.
\end{proposition}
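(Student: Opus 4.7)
The plan is to reduce the claim to the linear independence of Hermite evaluation functionals, via the reproducing identity. Write each $h_j$ in the form
\[
h_j=\sum_{\ell=0}^{m_j}c_{j,\ell}\,K^{(\ell)}_{z_j}
\]
for some scalars $c_{j,\ell}\in\C$ and some integer $m_j\geq 0$; only finitely many coefficients are needed for any given $h_j$. Assuming $\sum_{j=1}^{s}h_j=0$, the reproducing identity $\inner{f}{K^{(\ell)}_{z_j}}=f^{(\ell)}(z_j)$ gives, for every $f\in\Ho$,
\[
0=\Big\langle f,\,\sum_{j=1}^{s}h_j\Big\rangle=\sum_{j=1}^{s}\sum_{\ell=0}^{m_j}\overline{c_{j,\ell}}\,f^{(\ell)}(z_j).
\]

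Next I would invoke classical Hermite interpolation: given the distinct nodes $z_1,\ldots,z_s\in\D$ together with multiplicities $m_1+1,\ldots,m_s+1$, and arbitrary target values $\{a_{j,\ell}\}$, there exists a polynomial $p$ with $p^{(\ell)}(z_j)=a_{j,\ell}$ for all admissible $(j,\ell)$. Since every polynomial lies in $\Ho$, for each fixed pair $(j_0,\ell_0)$ I can substitute into the identity above a polynomial $f_{j_0,\ell_0}$ satisfying $f_{j_0,\ell_0}^{(\ell)}(z_j)=\delta_{(j,\ell),(j_0,\ell_0)}$ throughout the index set. This isolates $\overline{c_{j_0,\ell_0}}=0$, and ranging over all $(j_0,\ell_0)$ forces $h_j=0$ for every $j$, as desired.

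The only non-trivial input is Hermite interpolation at distinct complex nodes, which is standard: if a polynomial of degree at most $\sum_{j}(m_j+1)-1$ had all prescribed derivatives vanishing, then $\prod_{j}(z-z_j)^{m_j+1}$ would divide it, forcing it to be identically zero by a degree count, so the evaluation-derivative map is a bijection. This is also the only place in the argument where the distinctness of the $z_j$ is used. I do not anticipate any serious obstacle: the proposition is essentially a duality statement to the effect that the functions $K^{(\ell)}_{z_j}$, as Riesz representatives of linearly independent Hermite functionals, must themselves be linearly independent even across different base points.
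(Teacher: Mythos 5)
Your proof is correct and follows essentially the same route as the paper: both reduce the claim to the linear independence of the derivative-evaluation (Hermite) functionals at distinct nodes by pairing the vanishing sum against polynomial test functions, via the reproducing identity $\inner{f}{K^{(\ell)}_{z_j}}=f^{(\ell)}(z_j)$. The only difference is one of bookkeeping: the paper argues by contradiction with a single explicit witness polynomial $(z-z_t)^{d_t}\prod_{j\neq t}(z-z_j)^{d_j+1}$ that isolates the top-order coefficient of one nonzero summand, whereas you invoke full Hermite interpolation to annihilate every coefficient at once.
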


\begin{proof}
For each $j$, there exists a polynomial $p_j$ for which
\[\inner{f}{h_j} = \big(p_j(D)f\big)(z_j)\quad\text{ for all  } f\in\Ho,\]
where $D=\frac{d}{dz}$. If $h_t\neq 0$ for some $1\leq t\leq s$, then $p_t$ is not identically zero. There then exists a holomorphic polynomial $f$ such that $(p_j(D)f)(z_j)=0$ for all $j\neq t$ and $(p_t(D)f)(z_t)\neq 0$. Indeed, one may take
\[f(z)=(z-z_t)^{d_t}\prod_{j\neq t}(z-z_j)^{d_j+1},\]
where $d_j = 0$ if $p_j$ is the zero polynomial and $d_j=\deg(p_j)$ if $p_j\neq 0$. We then have 
\[\inner{f}{h_1+\cdots+h_s} = \big(p_1(D)f\big)(z_1)+\cdots+\big(p_s(D)f\big)(z_s) = (p_t(D)f)(z_t) \neq 0,\]
which implies $h_1+\cdots+h_s\neq 0$. The conclusion of the proposition then follows.
\end{proof}

Let $f$ be a multiplier of $\Ho$. For $\lambda\in\D$, it is well known that $M_{f}^{*}K_{\lambda}=\overline{f(\lambda)}K_{\lambda}$. More generally, let us find a formula for $M_{f}^{*}(K^{(\ell)}_\lambda)$ for any integer $\ell\geq 0$. For $h\in\Ho$, we have
\begin{align*}
\inner{h}{M_{f}^{*}(K^{(\ell)}_{\lambda})} & = \inner{h\,f}{K^{(\ell)}_{{\lambda}}} = (h\,f)^{(\ell)}({\lambda})\\
& = \sum_{j=0}^{\ell}\binom{\ell}{j}\,h^{(\ell-j)}({\lambda})\,f^{(j)}({\lambda})\\
& = \sum_{j=0}^{\ell}\binom{\ell}{j}\,f^{(j)}({\lambda})\inner{h}{K^{(\ell-j)}_{\lambda}}\\
& = \Big\langle h, \sum_{j=0}^{\ell}\binom{\ell}{j}\overline{f^{(j)}({\lambda})}\,K^{(\ell-j)}_{\lambda}\Big\rangle.
\end{align*}
Consequently,
\begin{align}
\label{Eqn:adjointM_f_Kernel}
M_{f}^{*}(K^{(\ell)}_{\lambda}) = \sum_{j=0}^{\ell}\binom{\ell}{j}\overline{f^{(j)}({\lambda})}\,K^{(\ell-j)}_{\lambda}.
\end{align}
It follows that the subspace $\calX_{{\lambda}}$ is invariant under $M_f^{*}$ and with respect to the ordered basis $\{K_{\lambda},\ldots,K^{(m)}_{\lambda},\ldots\}$, the operator $M_{f}^{*}$ has an upper triangular matrix representation:
\begin{align*}
\begin{pmatrix}
\overline{f({\lambda})} & \overline{f'({\lambda})} & \overline{f''({\lambda})} & \cdots & \overline{f^{(m)}({\lambda})} & \cdots\\
0 & \overline{f({\lambda})} & 2\,\overline{f'({\lambda})} & \cdots & m\,\overline{f^{(m-1)}({\lambda})} & \cdots\\
0 & 0 & \overline{f({\lambda})} & \cdots & \binom{m}{2}\,\overline{f^{(m-2)}({\lambda})} & \cdots\\
\vdots & \vdots & \vdots & \cdots & \vdots  & \cdots\\
0 & 0 & 0 & \cdots & \overline{f({\lambda})} & \cdots
\end{pmatrix}.
\end{align*}
From this we see that if there exist complex numbers $c_0,\ldots, c_m$ with $c_m\neq 0$ such that
\[M_{f}^{*}\Big(\sum_{j=0}^{m}c_j\,K^{(j)}_{\lambda}\Big) = 0,\]
then $f({\lambda})=\cdots=f^{(m)}({\lambda})=0$.
By a similar argument, we see that if
\[M_{f}^{*}\Big(\sum_{j=0}^{m}c_j\,K^{(j)}_{\lambda}\Big) = K_{{\lambda}},\]
then either $m=0$, or $m\geq 1$ and $f({\lambda})=\cdots=f^{(m-1)}({\lambda})=0$.

Let $Z=\{z_0=0, z_1,\ldots, z_s\}$ be a collection of distinct points on the unit disk. We would like to construct inner functions in $\Ho$ that belong to the sum 
\[\calX_{Z}=\calX_{z_0}+\calX_{z_1}+\cdots+\calX_{z_s}.\]
The sum above is direct by Proposition \ref{P:linear_independency} but non-orthogonal. If $d_0, \ldots, d_s$ are non-negative integers, then it is clear from the determinant expansion that the non-zero vector
\[\nu=D\big(z^{d_0};\,1,z,\ldots,z^{d_0-1}, K_{z_1},\ldots,K_{z_1}^{(d_1)},\ldots, K_{z_s},\ldots,K_{z_s}^{(d_s)}\big)\] belongs to $\calX_{Z}$. On the other hand, by Theorem \ref{T:inner_projection_complement}, $\nu/\|\nu\|_{\Ho}$ is an inner function. It turns out that any inner function in $\calX_{Z}$ is a unimodular constant multiple of such a ratio.

\begin{theorem}
\label{T:directsum_inner}
Let $Z=\{z_0=0, z_1,\ldots, z_s\}$ be a collection of distinct points on the unit disk. Suppose $f_j\in\calX_{z_j}\backslash\{0\}$ and $d_j=\deg(f_j)$ for $0\leq j\leq s$. If $B=f_0+\cdots+f_s$ is $\Ho$-inner, then $B$ is a constant multiple of the vector
\[D\big(z^{d_0};\,1,z,\ldots,z^{d_0-1}, K_{z_1},\ldots,K_{z_1}^{(d_1)},\ldots, K_{z_s},\ldots,K_{z_s}^{(d_s)}\big)\] as defined in \eqref{Eqn:defn_projection}. Furthermore, $B/b$ is holomorphic on a neighborhood of the closed unit disk, where
\[b(z) = z^{d_0}\prod_{j=1}^{s}\Big(\frac{z-z_j}{1-\bar{z}_jz}\Big)^{d_j}.\]
We shall call $B$ an $\Ho$-analogue of the Blaschke product $b$.
\end{theorem}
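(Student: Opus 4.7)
The plan is to combine the characterization of $\Ho$-inner multipliers in Theorem \ref{T:inner_operator_char} with the $M_B^*$-invariance of each subspace $\calX_{z_j}$. First I would verify that $B$ is a multiplier of $\Ho$: each kernel derivative $K_{z_j}^{(\ell)}$ is holomorphic on $|z|<1/|z_j|$ (its Taylor coefficients involve $\bar z_j^{n-\ell}/\omega_n$, and condition \eqref{Eqn:weight_condition} gives $\omega_n^{1/n}\to 1$), so $B$ extends holomorphically to some open disk of radius $r>1$, and Lemma \ref{L:multiplier} applies.

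With $B$ a multiplier, Theorem \ref{T:inner_operator_char} gives $M_B^*B=M_B^*M_B\mathbf{1}=\mathbf{1}$. Writing $B=f_0+\sum_{j=1}^s f_j$, the formula \eqref{Eqn:adjointM_f_Kernel} shows each $\calX_{z_j}$ is invariant under $M_B^*$, so $M_B^*f_0\in\calX_0$ while $M_B^*f_j\in\calX_{z_j}$ for $j\geq 1$. Since $\mathbf{1}=K_0\in\calX_0$ and the subspaces $\{\calX_{z_j}\}_{j=0}^s$ are mutually linearly independent (Proposition \ref{P:linear_independency}), matching components forces
\[M_B^*f_0=K_0,\qquad M_B^*f_j=0\text{ for all }1\leq j\leq s.\]
Applying the triangularity observations made just after \eqref{Eqn:adjointM_f_Kernel}, the first equation (with $\deg f_0=d_0$) yields $B(0)=\cdots=B^{(d_0-1)}(0)=0$, while the second (with $\deg f_j=d_j$) yields $B(z_j)=B'(z_j)=\cdots=B^{(d_j)}(z_j)=0$.

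Rephrased via the reproducing property, these vanishing statements say $B\perp z^k$ for $0\leq k\leq d_0-1$ and $B\perp K_{z_j}^{(\ell)}$ for $0\leq\ell\leq d_j$ and $1\leq j\leq s$. Let $\mathcal{L}$ be the span of all these vectors and let $\mathcal{Y}=\mathcal{L}+\C z^{d_0}$. Then $B$ lies in $\mathcal{Y}\cap\mathcal{L}^\perp$, a subspace of dimension at most one that by construction also contains the Gram-determinant vector $\nu=D(z^{d_0};1,z,\ldots,z^{d_0-1},K_{z_1},\ldots,K_{z_s}^{(d_s)})$. Since $B\neq 0$, this intersection is exactly one-dimensional and $B$ is a scalar multiple of $\nu$.

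Finally, to show $B/b$ extends holomorphically across $\overline{\D}$, note that the vanishing orders of $B$ at $0$ and at each $z_j$ meet or exceed those of $b$ (which are $d_0$ and $d_j$ respectively), while the only poles of $1/b$ lie at the points $1/\bar z_j$ outside the disk of radius $r$ from the first paragraph. So $B/b$ has only removable singularities on $|z|<r$ and extends to a neighborhood of $\overline{\D}$. The main obstacle is the step that turns the scalar identity $M_B^*B=\mathbf{1}$ into the separate equations $M_B^*f_0=K_0$ and $M_B^*f_j=0$ for $j\geq 1$; once this decomposition is in hand, the triangular action of $M_B^*$ on $\calX_\lambda$ and the Gram-determinant description of $\mathcal{Y}\cap\mathcal{L}^\perp$ complete the argument.
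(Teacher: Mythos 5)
Your proposal is correct and follows essentially the same route as the paper: establish that $B$ is a multiplier via Lemma \ref{L:multiplier}, use Theorem \ref{T:inner_operator_char} to get $M_B^*M_B\mathbf{1}=\mathbf{1}$, split this equation across the $M_B^*$-invariant subspaces $\calX_{z_j}$ using Proposition \ref{P:linear_independency}, read off the vanishing orders from the triangular matrix of $M_B^*$ on each $\calX_\lambda$, and identify $B$ with the Gram-determinant projection of $z^{d_0}$ onto $\mathcal{L}^\perp$. Your closing step (the one-dimensionality of $\mathcal{Y}\cap\mathcal{L}^\perp$) is just a rephrasing of the paper's computation $B=P_{\calM^\perp}(\gamma z^{d_0}+h)=\gamma P_{\calM^\perp}(z^{d_0})$, and you in fact supply one detail the paper leaves implicit, namely why $B$ extends holomorphically past the closed disk so that Lemma \ref{L:multiplier} applies.
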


\begin{proof}
By Lemma \ref{L:multiplier}, $B$ is a multiplier of $\Ho$. Since $B$ is inner, Theorem \ref{T:inner_operator_char} asserts that $M_{B}^{*}M_{B}(1)=1$, which gives
\[M_{B}^{*}(f_0)+\cdots+M_{B}^{*}(f_s) = 1 = K_{z_0}.\]
By Proposition \ref{P:linear_independency}, $M_{B}^{*}(f_j)=0$ for all $1\leq j\leq s$, and $M_{B}^{*}(f_0)=K_{z_0}$. Since $d_j=\deg(f_j)$, the discussion preceding the theorem implies that
\[B(z_j)=\cdots = B^{(d_j)}(z_j)=0\] for all $1\leq j\leq s$, and for $j=0$, either $d_0=0$, or $d_0\geq 1$ and $B(0)=\cdots=B^{(d_0-1)}(0)=0$. Consequently, $B/b$ is holomorphic on a neighborhood of the closed unit disk and $B$ belongs to the orthogonal complement of the subspace
\[\calM=\lspan\Big(\{1,z,\ldots,z^{d_0-1}\}\cup\{K_{z_1},\ldots,K_{z_1}^{(d_1)}\}\cup\cdots\cup\{K_{z_s},\ldots,K_{z_s}^{(d_s)}\}\Big).\]
In the case $d_0=0$, the first set in the above union is understood to be empty. Since $B$ is a linear combination of $z^{d_0}$ and functions in $\calM$, there is a constant $\gamma$ and $h\in\calM$ such that $B=\gamma\,z^{d_0}+h$. We then have
\[B = P_{\calM^{\perp}}(f) = P_{\calM^{\perp}}(\gamma\,z^{d_0}+h) = \gamma\, P_{\calM^{\perp}}(z^{d_0}).\]
By the discussion in subsection \ref{SS:Gram_determinant}, the conclusion of the theorem now follows.
\end{proof}

\begin{example} We now revisit Example \ref{E:repeated_root_Blaschke}. Let $a\in\D\backslash\{0\}$. We would like to construct an $\Ho$-inner function of the form
\[B(z) = \alpha + \beta K_a + \gamma\frac{\partial K_a}{\partial \bar{a}}\] where $\alpha, \beta, \gamma$ are complex constants with $\gamma\neq 0$. Theorem \ref{T:Ho_inner_analogue} asserts that $B$ is a constant multiple of the vector
\begin{align*}
D\big(1;K_a,K_a^{(1)}\big) & = \det\begin{pmatrix}
1 & \inner{1}{K_a} & \inner{1}{K_a^{(1)}}\\
K_a & \inner{K_a}{K_a} & \inner{K_a}{K_a^{(1)}}\\
K_a^{(1)} & \inner{K_a^{(1)}}{K_a} &  \inner{K_a^{(1)}}{K_a^{(1)}}
\end{pmatrix}\\
& = \det\begin{pmatrix}
1 & 1 & 0\\
K_a & K_a(a) & \overline{K_a^{(1)}(a)}\\
K_a^{(1)} & K_a^{(1)}(a) & \|K_a^{(1)}\|^2
\end{pmatrix}.
\end{align*}
If $\Ho=H^2$, then the above formula becomes
\begin{align*}
\det\begin{pmatrix}
1 & 1 & 0\\
\frac{1}{1-\bar{a}z} & \frac{1}{1-|a|^2} & \frac{\bar{a}}{(1-|a|^2)^2}\\
\frac{z}{(1-\bar{a}z)^2} & \frac{a}{(1-|a|^2)^2} & \frac{1+|a|^2}{(1-|a|^2)^3}
\end{pmatrix} 
& 
 =\frac{1}{(1-|a|^2)^4}\Big\{1-\frac{1-|a|^4}{1-\bar{a}z} + \frac{(1-|a|^2)^2\bar{a}z}{(1-\bar{a}z)^2}\Big\}\\
 & = \frac{\bar{a}^2}{(1-|a|^2)^4}\,\Big(\frac{a-z}{1-\bar{a}z}\Big)^2,
\end{align*}
as expected.
\end{example}

\section{Inner functions via reproducing kernels}
In this section we discuss a construction of $\Ho$-inner functions via a different approach. The ideas here were motivated by the study of contractive zero-divisors on Bergman spaces in \cite[Section~5.4]{DurenAMS2004} and \cite[Section~3.6]{HedenmalmSpringer2000}. Our approach involves the reproducing kernels of certain weighted spaces. The use of such reproducing kernels to construct contractive divisors in Bergman spaces has also been used by Hansbo \cite{Hansbo1996}. Our construction here works for general $\Ho$-spaces.

Let $b$ belong to $\Ho$ and define $\Ho(|b|^2)$ to be the closure of all holomorphic polynomials $f$ with respect to the norm
\[\|f\|_{\Ho(|b|^2)} = \|fb\|_{\Ho}.\] 
The inner product in $\Ho(|b|^2)$ is defined on polynomials as
\[\inner{f}{g}_{\Ho(|b|^2)} = \inner{fb}{gb}_{\Ho}.\]
Throughout this section, to reduce confusion, we shall use $\inner{\cdot}{\cdot}_{\Ho}$ to denote the inner product on $\Ho$.
It can be shown that for any ${\lambda}\in\D$, the evaluation functional $f\mapsto f({\lambda})$ is bounded on $\Ho(|b|^2)$ so $\Ho(|b|^2)$ is a reproducing kernel Hilbert space of holomorphic functions on $\D$. 
In addition, for any $f\in\Ho(|b|^2)$, we compute
\begin{align*}
\|zf\|_{\Ho(|b|^2)} & = \|zfb\|_{\Ho} \leq \|M_{z}\|_{\Ho\rightarrow\Ho}\cdot\|fb\|_{\Ho} = \|M_{z}\|_{\Ho\rightarrow\Ho}\cdot\|f\|_{\Ho(|b|^2)}.
\end{align*}
Consequently, the multiplication operator $M_{z}$ is bounded on $\Ho(|b|^2)$.

For a moment, suppose now $b$ is a finite Blaschke product with simple zeros $\{z_1,\ldots, z_s\}\subset\D\backslash\{0\}$ and a zero of multiplicity $d$ at the origin. Let $B$ be the $\Ho$-analogue of $b$ constructed in Theorem \ref{T:directsum_inner}. We have seen that $B/b$ is holomorphic on a neighborhood of the closed unit disk and that $B$ can be written in the form
\[B = p + \sum_{j=1}^{s}c_jK_{z_j},\] where $p$ is a polynomial of degree $d$. We claim that $B/b$ is a constant multiple of the reproducing kernel of $\Ho(|b|^2)$ at the origin. Indeed, we take any polynomial $f$ and compute
\begin{align*}
\inner{f}{B/b}_{\Ho(|b|^2)} & = \inner{fb}{B}_{\Ho} = \inner{fb}{p+\sum_{j=1}^{s}c_jK_{z_j}}_{\Ho}\\
& = \inner{fb}{p}_{\Ho} + \sum_{j=1}^{s}\bar{c_j}f(z_j)b(z_j) = f(0)\inner{b}{p}_{\Ho}.
\end{align*}
The last equality follows from the fact that $b(z_j)=0$ for all $1\leq j\leq s$ and that the origin is a zero of $b$ of multiplicity $d$ and $p$ is a polynomial of degree $d$. In addition, $\inner{b}{p}_{\Ho}\neq 0$.
Consequently,
\[f(0) = \binner{f}{\frac{1}{\inner{p}{b}_{\Ho}}\cdot\frac{B}{b}}_{\Ho(|b|^2)},\]
which shows that $\frac{1}{\inner{p}{b}_{\Ho}}\cdot\frac{B}{b}$ is the reproducing kernel function at the origin for $\Ho(|b|^2)$. 

In the case $\Ho=A^2$, the above result is well known. It is also known that $B$ does not process any extraneous zeros, that is, $B/b$ does not vanish in the closed unit disk. See \cite[Section~5.4]{DurenAMS2004} for a detailed discussion for general $A^p$ spaces. 

When $b$ has repeated zeros, the above assertion remains valid. One needs to consider derivatives of reproducing kernel functions as well. We summarize the result in the following theorem.

\begin{theorem}
\label{T:Ho_inner_analogue}
Let $b$ be a finite Blaschke product and $B$ be its $\Ho$-analogue constructed in Theorem \ref{T:directsum_inner}. Then $B/b$ is a constant multiple of the reproducing kernel function of $\Ho(|b|^2)$ at the origin.
\end{theorem}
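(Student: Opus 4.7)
The plan is to mimic and extend the computation the paper sketches for the simple-zeros case in the paragraph immediately preceding the theorem. Because polynomials are dense in $\Ho(|b|^2)$ by definition, it suffices to show that for every polynomial $h$, the pairing $\inner{h}{B/b}_{\Ho(|b|^2)} = \inner{hb}{B}_{\Ho}$ is a fixed nonzero scalar times $h(0)$. The role of the reproducing kernels $K_{z_j}^{(\ell)}$ is to translate the inner product into derivative evaluations, which will all vanish because $b$ vanishes to sufficiently high order at each $z_j$.

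By Theorem \ref{T:directsum_inner} we may write
\[
B = p + \sum_{j=1}^{s} f_j, \qquad f_j = \sum_{\ell=0}^{d_j} c_{j,\ell}\, K_{z_j}^{(\ell)},
\]
where $p$ is a polynomial of exact degree $d_0$ (its leading coefficient is the Gram determinant of the remaining vectors in the defining determinant, which is strictly positive by linear independence), and where $b$ vanishes to order $d_0$ at the origin and to order at least $d_j+1$ at each $z_j$ (matching the zeros of $B$, so that $B/b$ is holomorphic on a neighborhood of $\overline{\D}$).

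For a polynomial $h$, the reproducing identity $\inner{g}{K_{z_j}^{(\ell)}}_{\Ho} = g^{(\ell)}(z_j)$ gives
\[
\inner{hb}{B}_{\Ho} = \inner{hb}{p}_{\Ho} + \sum_{j=1}^{s}\sum_{\ell=0}^{d_j} \overline{c_{j,\ell}}\, (hb)^{(\ell)}(z_j).
\]
Since $b$ vanishes to order at least $d_j+1$ at $z_j$, the same holds for $hb$, so $(hb)^{(\ell)}(z_j) = 0$ for every $0 \leq \ell \leq d_j$ and the entire double sum is zero. For the remaining term, write $b(z) = z^{d_0}\tilde{b}(z)$ with $\tilde{b}$ holomorphic near $\overline{\D}$ and $\tilde{b}(0) \neq 0$. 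Then the Taylor expansion of $hb$ at the origin begins in degree $d_0$ with coefficient $h(0)\tilde{b}(0)$, while $p$ has exact degree $d_0$ with leading coefficient $p_{d_0} \neq 0$. Monomial orthogonality in $\Ho$ retains only the $z^{d_0}$ pairing, so
\[
\inner{hb}{p}_{\Ho} = h(0)\,\tilde{b}(0)\,\overline{p_{d_0}}\,\omega_{d_0} = h(0)\,\inner{b}{p}_{\Ho},
\]
with the last equality coming from the special case $h \equiv 1$, and the scalar $\inner{b}{p}_{\Ho} = \tilde{b}(0)\,\overline{p_{d_0}}\,\omega_{d_0}$ is nonzero.

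Combining, $\inner{h}{B/b}_{\Ho(|b|^2)} = h(0)\,\inner{b}{p}_{\Ho}$ for every polynomial $h$, which identifies $B/b$ as the nonzero constant $\overline{\inner{b}{p}_{\Ho}}$ times the reproducing kernel of $\Ho(|b|^2)$ at the origin. The main point requiring care (rather than a deep obstacle) is the bookkeeping of vanishing orders at each $z_j$, so that every derivative $(hb)^{(\ell)}(z_j)$ in the collapsing sum truly is zero; this is precisely what the zero structure furnished by Theorem \ref{T:directsum_inner} guarantees.
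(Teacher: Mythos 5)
Your argument is correct and is exactly the paper's approach: the paper carries out this computation explicitly only for the simple-zero case and then asserts that ``one needs to consider derivatives of reproducing kernel functions as well,'' which is precisely the bookkeeping you supply, down to the same identification $B/b=\overline{\inner{b}{p}_{\Ho}}\,R_0$. One remark worth keeping in mind: your reading of the vanishing orders --- $b$ vanishes to order $d_j+1$ at each $z_j$ and to order $d_0$ at the origin --- is the one the argument actually requires (otherwise the terms $\overline{c_{j,d_j}}\,(hb)^{(d_j)}(z_j)$ would survive) and is the reading consistent with the paper's examples and its simple-zero discussion, even though the displayed formula for $b$ in Theorem \ref{T:directsum_inner} shows the exponent $d_j$ rather than $d_j+1$.
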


Now consider the case $b$ is an arbitrary $H^2$-inner function. Even in the case $b$ is an infinite Blaschke product, it is not clear what should be considered as an $\Ho$-analogue of $b$. One may consider infinite sums of kernel functions but convergence may pose a serious issue. Inspired by Theorem \ref{T:Ho_inner_analogue}, we provide here an answer, even for more general $b$.

\begin{theorem}
\label{T:inner_kernel}
Suppose $b$ belongs to $\Ho$. Then the following statements hold.
\begin{enumerate}[(a)]
   \item Let $R_0$ denote the reproducing kernel at the origin for $\Ho(|b|^2)$. Then the function \begin{align}
\label{Eqn:inner_kernel}
u=\frac{bR_0}{\|bR_0\|_{\Ho}}
\end{align} is $\Ho$-inner. 
   \item Suppose $\varphi$ is an $\Ho$-inner function and $F$ is holomorphic such that $b=\varphi F$ and $[b]_{\Ho}=[\varphi]_{\Ho}$. Then $F$ does not vanish on $\D$ and $u=\big(|F(0)|/\overline{F(0)}\big)\varphi$.
\end{enumerate}
\end{theorem}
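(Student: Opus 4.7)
The strategy is to exploit the isometry $M_{b}\colon \Ho(|b|^2)\to [b]_{\Ho}$ together with the reproducing property $\inner{f}{R_{0}}_{\Ho(|b|^2)}=f(0)$.

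For part (a), since $\|u\|_{\Ho}=1$ by construction, I only need to verify $\inner{z^m u}{u}_{\Ho}=0$ for every integer $m\geq 1$. The definition of the inner product on $\Ho(|b|^2)$ and the reproducing property at the origin give
\[
\|bR_0\|_{\Ho}^{2}\,\inner{z^m u}{u}_{\Ho} = \inner{z^m bR_0}{bR_0}_{\Ho} = \inner{z^m R_0}{R_0}_{\Ho(|b|^2)} = (z^m R_0)(0) = 0.
\]

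For part (b), I first argue that $F$ is non-vanishing on $\D$. Because point evaluations and their derivatives are continuous on $\Ho$, every $g\in[b]_{\Ho}$ vanishes at each $\alpha\in\D$ to order at least $\text{ord}_{\alpha}(b)$. Applying this to $\varphi\in[b]_{\Ho}=[\varphi]_{\Ho}$ and using $b=\varphi F$ forces $\text{ord}_{\alpha}(F)\leq 0$, so $F(\alpha)\neq 0$. Next, using $\varphi\in[b]_{\Ho}$, I pick polynomials $p_n$ with $p_n b\to \varphi$ in $\Ho$; equivalently $\|p_n - 1/F\|_{\Ho(|b|^2)} = \|p_n b - \varphi\|_{\Ho}\to 0$, which places $1/F$ inside $\Ho(|b|^2)$ with $(1/F)(0)=1/F(0)$.

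The key computation is then, for any polynomial $p$: the identity $p\varphi = (p/F)\cdot b$, together with $p/F\in\Ho(|b|^2)$ (since $M_p$ is bounded there) and the reproducing property, gives
\[
\inner{p\varphi}{bR_0}_{\Ho} = \inner{p/F}{R_0}_{\Ho(|b|^2)} = (p/F)(0) = p(0)/F(0).
\]
Specializing to $p(z)=z^k$ for $k\geq 1$ yields $\inner{z^k\varphi}{u}_{\Ho}=0$; combined with $\inner{z^k\varphi}{\varphi}_{\Ho}=0$ (which holds because $\varphi$ is $\Ho$-inner), this shows that $w:=u-\inner{u}{\varphi}_{\Ho}\,\varphi$ lies in $[\varphi]_{\Ho}$ and is orthogonal to every $z^k\varphi$ for $k\geq 0$. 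Since these span a dense subspace of $[\varphi]_{\Ho}$, I conclude $w=0$, so $u=\lambda\varphi$ with $\lambda = 1/(\overline{F(0)}\,\|bR_0\|_{\Ho})$ (from the key computation at $p=1$). Finally, $\|u\|=\|\varphi\|=1$ forces $|\lambda|=1$, yielding $\|bR_0\|_{\Ho}=1/|F(0)|$ and therefore $\lambda = |F(0)|/\overline{F(0)}$. The main obstacle to execute carefully is placing $1/F$ inside $\Ho(|b|^2)$ with the expected value at $0$; this is exactly where the hypothesis $[b]_{\Ho}=[\varphi]_{\Ho}$ is indispensable, as it supplies the polynomial approximations needed to transfer convergence from $\Ho$ to $\Ho(|b|^2)$.
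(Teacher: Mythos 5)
Your proof is correct. Part (a) is essentially identical to the paper's argument: both reduce $\inner{z^m bR_0}{bR_0}_{\Ho}$ to $(z^m R_0)(0)=0$ via the isometry $M_b\colon\Ho(|b|^2)\to\Ho$ and the reproducing property at the origin. In part (b) you diverge from the paper in two places, both legitimately. First, for the non-vanishing of $F$ the paper runs a normal-families argument (from $\varphi Fq_n\to\varphi$ and $\varphi p_n\to\varphi F$ uniformly on compacta it deduces $Fq_n\to 1$, hence $F\neq 0$ and $q_n\to 1/F$ pointwise); your order-of-vanishing argument ($\mathrm{ord}_\alpha(\varphi)\geq\mathrm{ord}_\alpha(b)=\mathrm{ord}_\alpha(\varphi)+\mathrm{ord}_\alpha(F)$, using continuity of derivative evaluations on $[b]_{\Ho}$) is shorter and cleaner. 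Second, the paper identifies the kernel explicitly as $R_0=1/(\overline{F(0)}F)$ by testing against polynomials and then computes $bR_0=\varphi/\overline{F(0)}$ directly; you instead avoid computing $R_0$ and show $u\perp z^k\varphi$ for $k\geq 1$ together with $u\in[b]_{\Ho}=[\varphi]_{\Ho}$, then invoke density of $\mathrm{span}\{z^k\varphi\}$ in $[\varphi]_{\Ho}$ to get $u=\lambda\varphi$, recovering $\lambda$ from the case $p=1$ and $\|u\|=1$. The paper's route yields the extra information $R_0=1/(\overline{F(0)}F)$; yours is marginally more robust in that it never needs the explicit form of $R_0$. Two small points of rigor you should tidy: the identity $\|p_n-1/F\|_{\Ho(|b|^2)}=\|p_nb-\varphi\|_{\Ho}$ presupposes that $1/F$ already lives in $\Ho(|b|^2)$ — the clean order is to note that $\{p_n\}$ is Cauchy there, hence convergent, and then identify the limit with $1/F$ by pointwise convergence off the (discrete) zero set of $b$; and you use $u\in[\varphi]_{\Ho}$ without comment — it holds because $bR_0\in[b]_{\Ho}$ (as $R_0$ is an $\Ho(|b|^2)$-limit of polynomials) and $[b]_{\Ho}=[\varphi]_{\Ho}$, so this is where the second inclusion of the hypothesis enters your argument.
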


\begin{proof}
(a) For any integer $m\geq 1$, we have
\begin{align*}
\inner{bR_0 z^m}{bR_0}_{\Ho} & = \inner{R_0z^m}{R_0}_{\Ho(|b|^2)} = R_0(0)\cdot(0)^m = 0.
\end{align*}
We have used the fact that $R_0z^m$ is an element of $\Ho(|b|^2)$. Consequently, $u$ is $\Ho$-inner.

(b) Suppose now that $b=\varphi F$ with $[b]_{\Ho}=[\varphi]_{\Ho}$. There then exist sequences of polynomials $\{q_n\}$ and $\{p_n\}$ such that $bq_n\rightarrow \varphi$ and $\varphi p_n\rightarrow b$ in $\Ho$. That is, $\varphi Fq_n\rightarrow\varphi$ and $\varphi p_n\rightarrow\varphi F$ in $\Ho$. It follows that $\varphi Fq_n\rightarrow \varphi$ and $\varphi p_n\rightarrow \varphi F$ uniformly on compact subsets. The standard argument using the Maximum Modulus Principle and the fact that the zeros of $\varphi$ are isolated implies that $Fq_n\rightarrow 1$ and $p_n\rightarrow F$ uniformly on compact subsets as well. In particular, $F$ does not vanish on $\D$ and $q_n\rightarrow 1/F$ on compact sets.

On the other hand, we have
\begin{align*}
\|q_n-q_m\|_{\Ho(|b|^2)} & = \|bq_n-bq_m\|_{\Ho}\rightarrow 0
\end{align*}
as $m,n\rightarrow\infty$ because $\{bq_n\}$ converges in $\Ho$. This implies that $\{q_n\}$ is a Cauchy sequence, hence converges, in $\Ho(|b|^2)$. Since $\{q_n\}$ converges pointwise to $1/F$, we conclude that $1/F$ must be the limit. Consequently, $1/F$ belongs to $\Ho(|b|^2)$.

We now show that $R_0 = 1/(\overline{F(0)}F)$. For any polynomial $q$, we compute
\begin{align*}
\inner{q}{1/F}_{\Ho(|b|^2)} & = \inner{qb}{b/F}_{\Ho} = \inner{qb}{\varphi}_{\Ho}\\
& = \lim_{n\to\infty}\inner{q\varphi p_n}{\varphi}_{\Ho} = \lim_{n\to\infty}q(0)p_n(0) = q(0)F(0).
\end{align*}
Since polynomials are dense in $\Ho(|b|^2)$, it follows that $1/(\overline{F(0)}F)$ is the reproducing kernel at the origin for $\Ho(|b|^2)$. That is, $R_0 = 1/(\overline{F(0)}F)$. Formula \eqref{Eqn:inner_kernel} then gives
\[u=\frac{bR_0}{\|bR_0\|_{A^2}} = \frac{|F(0)|}{\overline{F(0)}}\,\varphi\]
as desired.
\end{proof}

Let us now consider examples in the Hardy and Bergman spaces. We shall see that for these spaces, formula \eqref{Eqn:inner_kernel} recovers the inner part of $b$.

\begin{example} 
Consider $\Ho=H^2$ and $b$ is any function in $H^2$. Let $b=\varphi\cdot F$ be the inner-outer factorization of $b$. Since the hypothesis of Theorem \ref{T:inner_kernel}(b) is satisfied, we see that formula \eqref{Eqn:inner_kernel} gives us a unimodular constant multiple of the inner function $\varphi$.
\end{example}

\begin{example}
Now consider the Bergman space $\Ho=A^2$ and $b\in A^2$. By \cite[Section 9.2, Theorem 4]{DurenAMS2004} or \cite[Section~3.6]{HedenmalmSpringer2000}, we may write $b=\varphi\cdot F$, where $\varphi$ is $A^2$-inner, $F$ is $A^2$-outer (in the sense that $[F]=A^2$) and $[b]_{A^2}=[\varphi]_{A^2}$. Theorem \ref{T:inner_kernel}(b) again shows that formula \eqref{Eqn:inner_kernel} recovers $\varphi$, up to a unimodular constant factor.
\end{example}

\begin{remark}
It has been known that inner-outer factorization is not unique in the Bergman space, see \cite[Chapter~8]{HedenmalmSpringer2000}. On the other hand, Theorem \ref{T:inner_kernel}(b) asserts that a decomposition $b=\varphi\cdot F$ with $\varphi$ an $\Ho$-inner function must be unique (up to a unimodular constant) if we also require $[b]_{\Ho}=[\varphi]_{\Ho}$.
\end{remark}


\def\cprime{$'$}
\providecommand{\bysame}{\leavevmode\hbox to3em{\hrulefill}\thinspace}
\providecommand{\MR}{\relax\ifhmode\unskip\space\fi MR }
\providecommand{\MRhref}[2]{%
  \href{http://www.ams.org/mathscinet-getitem?mr=#1}{#2}
}
\providecommand{\href}[2]{#2}

\end{document}